\theoremstyle{plain}
 \newtheorem{theorem}{Theorem}[section]
 \newtheorem{proposition}{Proposition}[section]
 \newtheorem{lemma}{Lemma}[section]
 \newtheorem{corollary}{Corollary}[section]
\theoremstyle{definition}
\theoremstyle{remark}
 \numberwithin{equation}{section}
\renewcommand{\leq}{\leqslant}
\renewcommand{\geq}{\geqslant}
\title[ A cotangent sum related to the Estermann zeta
function/ M. Goubi]{Series representation of a cotangent sum related
to the Estermann zeta function}
\subjclass[2010]{Primary: 11F20, 11E45. Secondary: 11M26, 11B41.}
\keywords{Estermann zeta function, Vasyunin cotangent sum,
generating function.}
\author{\bfseries Mouloud  Goubi} 
\address{Mouloud Goubi\\
Department of Mathematics \\
University of UMMTO RP. 15000\\
Tizi-ouzou, Algeria\\
Laboratoire d'Alg\`ebre et Th\'eorie des Nombres, USTHB Alger}
\email{mouloud.goubi@ummto.dz}
\begin{document}

\vspace{18mm} \setcounter{page}{1} \thispagestyle{empty}

\begin{abstract}
In this paper, we are interested by the cotangent sum
$c_0\left(\frac{q}{p}\right)$ related to the Estermann zeta function
for the special case when $q=1$ and get explicit formula for its
series expansion, which represents an improvement of the identity
$(2. 1)$ Theorem $(2.1)$ in the recent work \cite{GOUBI4}.
\end{abstract}

\maketitle

\section{Introduction and main results}
For $p$ a positive integer and $q=1,2,\cdots,p-1$ such that
$\left(p,q\right)=1$, let the cotangent sum \cite{Rassias}
\begin{equation}\label{equa1}
c_0\left(\frac{q}{p}\right)=-\sum_{k=1}^{p-1}\frac{k}{p}\cot\frac{\pi
kq}{p}.
\end{equation}
$c_0\left(\frac{q}{p}\right)$ is the value at
$s=0$,\[E_0\left(0,\frac{q}{p}\right)=\frac{1}{4}+\frac{i}{2}c_0\left(\frac{q}{p}\right)\]
of the Estermann zeta function
\[E_0\left(s,\frac{q}{p}\right)=\sum_{k\geq1}\frac{d(k)}{k^s}\exp\left(\frac{2\pi
ikq}{p}\right).\]\\
This sum is related directly to Vasyunin cotangent sum \cite{VASY};
\begin{equation}\label{equa2}
V\left(\frac{q}{p}\right)=\sum_{r=1}^{p-1}\left\{\frac{rq}{p}\right\}\cot\left(\frac{\pi
r}{p}\right)=-c_0\left(\frac{\overline{q}}{p}\right)
\end{equation}
which arises in the study of the Riemann zeta function by virtue of
the formula \cite{Bettin,Maier}.
\begin{eqnarray}\label{zetaint}
\begin{split}
&\qquad \frac{1}{2\pi\sqrt{pq}}\int_{-\infty}^{+\infty}\left|\zeta\left(\frac{1}{2}+it\right)\right|^2\left(\frac{q}{p}\right)^{it}\frac{dt}{\frac{1}{4}+t^2}=\\
&\frac{\log2\pi-\gamma}{2}\left(\frac{1}{p}+\frac{1}{q}\right)+\frac{p-q}{2pq}\log\frac{q}{p}-\frac{\pi}{2pq}
\left(V\left(\frac{p}{q}\right)+V\left(\frac{q}{p}\right)\right).
\end{split}
\end{eqnarray}
This formula is connected to the approach of Nyman, Beurling and
B\'{a}ez-Duarte to the Riemann hypothesis \cite{Maier1,Covalanko}.
Which states that the Riemann hypothesis is true if and only if
$\displaystyle\lim_{n\to\infty}d_N=0$, where
\[d^2_N=\inf_{A_N}\frac{1}{2\pi}\int_{-\infty}^{+\infty}\left|1-\zeta A\left(\frac{1}{2}+it\right)\right|^2\frac{dt}{\frac{1}{4}+t^2}\]
and the infimum is taken over all Dirichlet polynomials
\[A_N(s)=\sum_{n=1}^{N}\frac{a_n}{n^s}.\]

In the literature; different results about $V(\frac{q}{p})$ and
$c_0(\frac{q}{p})$ are obtained. For more details we refer to
\cite{GOUBI3, Rassias, Maier, Bettin2, GOUBI1} and reference
therein.\\

Exactly our interest in this work is the case $q=1$ in order to
compute explicitly the sequence $b_k$ in the series expansion $(2.
1)$ Theorem $(2.1)$ \cite{GOUBI4} of $c_0(\frac{1}{p})$:
\begin{equation}\label{equac0}
c_0\left(\frac{1}{p}\right)=\frac{p\left(p-1\right)\left(p-2\right)}{\pi}\sum_{k\geq0}\frac{b_k}{\left(k+1\right)
\left(k+p+1\right)\left(k+2\right)\left(k+p\right)}
\end{equation}
where $b_k$ is generated by the function
$f\left(x\right)=\frac{1}{\left(1-x\right)^2\left(1-x^p\right)}=\sum_{k\geq0}b_kx^k$.
The first terms are $b_0=1, b_1=2$ and the others are given by the
recursive formulae:
\begin{equation}\label{recurse1}
b_k-2b_{k-1}+b_{k-2}=0,\ 2\leq k\leq p-1,\ k=p+1,
\end{equation}
\begin{equation}\label{recurse2}
b_p-2b_{p-1}+b_{p-2}=1
\end{equation}
and
\begin{equation}\label{recurse3}
b_k-2b_{k-1}+b_{k-2}-b_{k-p}+2b_{k-p-1}-b_{k-p-2}=0,\ k\geq p+2.
\end{equation}

\section{Statement of main results}
In the following theorem, we prove that
$b_k=\left(k+1-\frac{p}{2}\left\lfloor\frac{k}{p}\right
\rfloor\right)\left(\left\lfloor\frac{k}{p}\right\rfloor+1\right)$,
where $\left\lfloor.\right\rfloor$ is the well known floor function.
\begin{theorem}\label{th1}
\begin{equation}\label{equath1}
c_0\left(\frac{1}{p}\right)=\frac{p\left(p-1\right)\left(p-2\right)}{\pi}\sum_{k\geq0}\frac{\left(k+1-\frac{p}{2}\left\lfloor\frac{k}{p}\right
\rfloor\right)\left(\left\lfloor\frac{k}{p}\right\rfloor+1\right)}{\left(k+1\right)\left(k+p+1\right)\left(k+2\right)\left(k+p\right)}.
\end{equation}
\end{theorem}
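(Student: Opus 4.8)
The plan is to establish the closed-form expression for $b_k$ and then deduce \eqref{equath1} by a direct substitution into \eqref{equac0}. Since the $b_k$ are by definition the Taylor coefficients of $f(x)=\frac{1}{(1-x)^2(1-x^p)}$, the most economical route is to read them off the generating function directly rather than to solve the recursions \eqref{recurse1}--\eqref{recurse3} range by range.

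First I would factor $f$ as a product of two elementary series,
\[
f(x)=\frac{1}{(1-x)^2}\cdot\frac{1}{1-x^p}=\left(\sum_{n\ge0}(n+1)x^n\right)\left(\sum_{m\ge0}x^{mp}\right).
\]
Forming the Cauchy product and collecting the coefficient of $x^k$, only the indices $m$ with $mp\le k$ contribute, so
\[
b_k=\sum_{m=0}^{\lfloor k/p\rfloor}\bigl(k-mp+1\bigr).
\]
Writing $M=\lfloor k/p\rfloor$, the summand is an arithmetic progression in $m$, and summing yields
\[
b_k=(M+1)(k+1)-p\,\frac{M(M+1)}{2}=(M+1)\left(k+1-\frac{p}{2}M\right),
\]
which is exactly the asserted formula.

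As a consistency check one may observe that multiplying by $(1-x)^2$ gives $(1-x)^2f(x)=\frac{1}{1-x^p}$, i.e.\ the single master relation $b_k-2b_{k-1}+b_{k-2}=[\,p\mid k\,]$ (with the convention $b_{-1}=b_{-2}=0$); this reproduces \eqref{recurse1}, \eqref{recurse2} and \eqref{recurse3} at once, once one notes that for $k\ge p+2$ the indicator obeys $[\,p\mid k\,]=[\,p\mid(k-p)\,]$. Finally, substituting this closed form for $b_k$ into \eqref{equac0} produces \eqref{equath1} verbatim.

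The computation is entirely elementary and no analytic difficulty arises; the only point requiring care is the bookkeeping at the boundaries where $\lfloor k/p\rfloor$ increments. I therefore expect the main (and quite modest) obstacle to be confirming that the piecewise recursions stated in the introduction are all subsumed by the single generating-function identity, rather than any substantive estimate.
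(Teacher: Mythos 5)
Your proposal is correct and follows essentially the same route as the paper: both compute $b_k$ as the Cauchy product of $\frac{1}{(1-x)^2}=\sum_{n\ge0}(n+1)x^n$ with $\frac{1}{1-x^p}=\sum_{m\ge0}x^{mp}$, sum the resulting arithmetic progression $\sum_{m=0}^{\lfloor k/p\rfloor}(k-mp+1)$ to obtain the closed form, and then substitute into \eqref{equac0}. Your additional consistency check against the recursions \eqref{recurse1}--\eqref{recurse3} is a harmless extra not present in the paper.
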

Let the integer sequence $s_n(p)$ defined by
\[s_n(p)=p\left(p-1\right)\left(p-2\right)\sum_{k=0}^{n}\frac{\left(k+1-\frac{p}{2}\left
\lfloor\frac{k}{p}\right\rfloor\right)\left(\left\lfloor\frac{k}{p}\right\rfloor+1\right)}{\left(k+1\right)\left(k+p+1\right)\left(k+2\right)\left(k+p\right)}.\]
Then  $\pi c_0\left(\frac{1}{p}\right)=\lim_{n\to\infty}s_n(p)$
which explains that $\pi c_0\left(\frac{1}{p}\right)$ lies to
$\overline{\mathbb{Q}}.$ Only in means of the Theorem \ref{th1}, for
$q=1$ the identity \eqref{zetaint} becomes
\begin{eqnarray}
\begin{split}
&\qquad \frac{1}{2\pi\sqrt{p}}\int_{-\infty}^{+\infty}\left|\zeta\left(\frac{1}{2}+it\right)\right|^2\left(\frac{1}{p}\right)^{it}\frac{dt}{\frac{1}{4}+t^2}=\\
&\frac{p-1}{2p}\left[2\left(\log2\pi-\gamma\right)-\log
p+p\left(p-2\right)\sum_{k\geq0}\frac{\left(k+1-\frac{p}{2}\left\lfloor\frac{k}{p}\right
\rfloor\right)\left(\left\lfloor\frac{k}{p}\right\rfloor+1\right)}{\left(k+1\right)\left(k+p+1\right)\left(k+2\right)\left(k+p\right)}\right].
\end{split}
\end{eqnarray}
Let the arithmetical function of two variables
\[\theta(i,j)=\frac{pi^2+\left(p+2r+2\right)i+2r+2}
{\left(ip+r+1\right)\left(ip+p+r+1\right)\left(ip+r+2\right)\left(ip+p+r\right)}\]
then the following proposition is obtained
\begin{proposition}\label{propo1}
\begin{equation}\label{equapropo1}
c_0\left(\frac{1}{p}\right)=\frac{p\left(p-1\right)\left(p-2\right)}{2\pi}\sum_{i\geq0}\sum_{r=0}^{r-1}\theta(i,j)
\end{equation}
\end{proposition}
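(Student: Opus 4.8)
The plan is to deduce Proposition \ref{propo1} directly from Theorem \ref{th1} by splitting the single series over $k\ge0$ according to the residue of $k$ modulo $p$. First I would write every nonnegative integer uniquely as $k=ip+r$ with $i\ge0$ and $0\le r\le p-1$, so that $\left\lfloor k/p\right\rfloor=i$. Before regrouping, I would check that the general term of the series in \eqref{equath1} is $O(1/k^2)$: the numerator grows like $i^2\sim k^2/p^2$ while the denominator grows like $k^4$, so the series converges absolutely and may be freely rearranged into the double sum $\sum_{i\ge0}\sum_{r=0}^{p-1}$. (I read the inner index set of \eqref{equapropo1} as $r=0,\dots,p-1$ and the second argument of $\theta$ as $r$, in agreement with its defining formula.)

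Next I would substitute $k=ip+r$ and $\left\lfloor k/p\right\rfloor=i$ into the summand. The four factors in the denominator become exactly $(ip+r+1)$, $(ip+p+r+1)$, $(ip+r+2)$ and $(ip+p+r)$, which are precisely the four factors appearing in $\theta(i,r)$. For the numerator one computes
\begin{equation*}
\left(k+1-\tfrac{p}{2}\left\lfloor\tfrac{k}{p}\right\rfloor\right)\left(\left\lfloor\tfrac{k}{p}\right\rfloor+1\right)=\left(\tfrac{pi}{2}+r+1\right)(i+1)=\tfrac12\bigl(pi^2+(p+2r+2)i+2r+2\bigr),
\end{equation*}
which is exactly $\tfrac12$ times the numerator of $\theta(i,r)$. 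Hence, after the substitution, each summand of \eqref{equath1} equals $\tfrac12\,\theta(i,r)$.

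Summing over $r=0,\dots,p-1$ and $i\ge0$ then gives
\begin{equation*}
\sum_{k\ge0}\frac{\left(k+1-\tfrac{p}{2}\left\lfloor k/p\right\rfloor\right)\left(\left\lfloor k/p\right\rfloor+1\right)}{(k+1)(k+p+1)(k+2)(k+p)}=\frac12\sum_{i\ge0}\sum_{r=0}^{p-1}\theta(i,r),
\end{equation*}
and multiplying both sides by $p(p-1)(p-2)/\pi$ reproduces \eqref{equapropo1}, the factor $\tfrac12$ being absorbed into the $2$ in the denominator of the prefactor there.

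The whole argument is essentially bookkeeping, so I do not expect a genuine obstacle. The only step that needs real care is the justification of the regrouping into the double sum, which is why I would establish absolute convergence (via the $O(1/k^2)$ bound) before performing the substitution; once that is in place, the identity reduces to the elementary expansion of the numerator displayed above.
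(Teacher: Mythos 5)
Your proof is correct and coincides with the paper's own ``arithmetic method'': writing $k=ip+r$ so that $\left\lfloor k/p\right\rfloor=i$, substituting into the summand of Theorem \ref{th1}, and observing that the numerator becomes $\tfrac12\left(pi^2+(p+2r+2)i+2r+2\right)$ while the denominator factors match those of $\theta$. The only differences are that you explicitly justify the regrouping into a double sum via absolute convergence (which the paper omits, though it is immediate since all terms are positive), and that the paper additionally offers an ``analytic'' derivation of the same coefficient formula via the recurrence $b_k=b_{k-p}+k+1$ obtained from Leibniz's rule, which your route does not need.
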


\section{Proof of main results}
\subsection{Proof of Theorem~\ref{th1}}
We take inspiration from the theory of generating functions
\cite{GOSPA,GOUBI2}, and prove that the sequence $b_k$ generated by
the rational function $f(x)$ is explicitly done in the following
lemma. And the identity \eqref{equath1}, Theorem \ref{th1} is
deduced.
\begin{lemma}\label{genera}
\begin{equation}\label{gene}
\frac{1}{\left(1-x\right)^2\left(1-x^p\right)}=\sum_{k\geq0}b_kx^k,\
|x|<1
\end{equation}
with
\begin{equation}\label{gene1}
b_k=\left(k+1-\frac{p}{2}\left\lfloor\frac{k}{p}\right\rfloor\right)\left(\left\lfloor\frac{k}{p}\right\rfloor+1\right)
\end{equation}
\end{lemma}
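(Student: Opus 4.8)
The plan is to realize $f(x)$ as the Cauchy product of two elementary power series and then evaluate the resulting convolution in closed form. First I would record the two expansions
\[
\frac{1}{(1-x)^2}=\sum_{m\geq0}(m+1)x^m,\qquad \frac{1}{1-x^p}=\sum_{j\geq0}x^{jp},
\]
both valid for $|x|<1$; the first is obtained by differentiating the geometric series and the second is immediate. Since both series converge absolutely on the disc $|x|<1$, their product may be multiplied out term by term, and extracting the coefficient of $x^k$ gives
\[
b_k=\sum_{0\leq jp\leq k}\bigl(k-jp+1\bigr).
\]

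The next step is to evaluate this finite sum. Writing the Euclidean division $k=ip+r$ with $i=\left\lfloor\frac{k}{p}\right\rfloor$ and $0\leq r\leq p-1$, the constraint $jp\leq k$ is equivalent to $0\leq j\leq i$, so the sum consists of exactly $i+1$ terms forming an arithmetic progression. Hence
\[
b_k=\sum_{j=0}^{i}\bigl((k+1)-jp\bigr)=(i+1)(k+1)-p\,\frac{i(i+1)}{2}=(i+1)\left(k+1-\frac{p}{2}\,i\right).
\]
Substituting $i=\left\lfloor\frac{k}{p}\right\rfloor$ back in yields precisely \eqref{gene1}, and Lemma~\ref{genera} follows.

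The computation is short, so the only real care is in the bookkeeping: one must check that the upper index of summation is exactly $i=\left\lfloor\frac{k}{p}\right\rfloor$, including the boundary case $p\mid k$, where the term $j=i$ contributes $k-ip+1=1$. This is the step I expect to be error-prone rather than genuinely hard. As an independent check one can verify directly that \eqref{gene1} satisfies the recurrences \eqref{recurse1}--\eqref{recurse3} with the initial values $b_0=1$, $b_1=2$; since $b_k$ is affine in $k$ on each block $ip\leq k<(i+1)p$, the second differences are straightforward to track, and the inhomogeneous term of \eqref{recurse2} appears exactly at $k=p$.
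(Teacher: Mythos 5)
Your proposal is correct and follows essentially the same route as the paper: expand $\frac{1}{(1-x)^2}$ and $\frac{1}{1-x^p}$, take the Cauchy product, and evaluate the resulting arithmetic progression $\sum_{j=0}^{\lfloor k/p\rfloor}(k-jp+1)$ in closed form. The only cosmetic difference is that the paper writes the second factor via an indicator function $b_p(j)$ before reducing the convolution to the same sum, so there is nothing substantive to distinguish the two arguments.
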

\begin{proof}
It is well known that
\begin{equation}\label{eqprof1}
\frac{1}{1-x}=\sum_{k\geq0}x^k,\ |x|<1
\end{equation}
and
\begin{equation}\label{eqprof2}
\frac{1}{1-x^p}=\sum_{k\geq0}b_p(k)x^k,\ |x|<1.
\end{equation}
with $b_p(k)=1$ if $p$ divides $k$ and zero otherwise. Then
\begin{equation*}
\frac{1}{\left(1-x\right)^2}=\sum_{k\geq0}\left(k+1\right)x^k,\
|x|<1
\end{equation*}
and
\begin{equation*}
\frac{1}{\left(1-x\right)^2\left(1-x^p\right)}=\sum_{k\geq0}\sum_{j=0}^{k}\left(k-j+1\right)b_p(j)x^k,\
|x|<1.
\end{equation*}
But \[\left\lfloor\frac{k}{p}\right\rfloor p\leq
k<\left(\left\lfloor\frac{k}{p}\right\rfloor+1\right) p\] then
\[\sum_{j=0}^{k}\left(k-j+1\right)b_p(j)=\sum_{j=0}^{\left\lfloor\frac{k}{p}\right\rfloor}\left(k-jp+1\right)\]
Since
\[\sum_{j=0}^{\left\lfloor\frac{k}{p}\right\rfloor}\left(k-jp+1\right)=\left(k+1\right)\left(\left\lfloor\frac{k}{p}\right\rfloor+1\right)
-\frac{p}{2}\left\lfloor\frac{k}{p}\right\rfloor\left(\left\lfloor\frac{k}{p}\right\rfloor+1\right).\]
Then
\[b_k=\left(k+1\right)\left(\left\lfloor\frac{k}{p}\right\rfloor+1\right)
-\frac{p}{2}\left\lfloor\frac{k}{p}\right\rfloor\left(\left\lfloor\frac{k}{p}\right\rfloor+1\right).\]
Finally
\[b_k=\left(k+1-\frac{p}{2}\left\lfloor\frac{k}{p}\right\rfloor\right)\left(\left\lfloor\frac{k}{p}\right\rfloor+1\right)\]
 \end{proof}

 \subsection{Proof of Proposition \ref{propo1}.}
 In this subsection, we expose two methods to prove the Proposition
 \ref{propo1}.\\

 {\bf Analytic method.} We began by the following interesting lemma
 \begin{lemma}\label{lembk}
 \begin{eqnarray}\label{bk}
b_k= \left\{
\begin{array}{lll}
k+1\ &\quad \textrm{if}\ k<p,\\
b_{k-p}+k+1\ &\quad  \textrm{if}\ k\geq p.
\end{array}
\right.
\end{eqnarray}
 \end{lemma}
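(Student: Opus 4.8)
The plan is to derive the recurrence \eqref{bk} directly from the generating function $f(x)=\frac{1}{(1-x)^2(1-x^p)}$, which is the shortest route and avoids any manipulation of the floor function. First I would multiply both sides of \eqref{gene} by the factor $1-x^p$, obtaining
\[
\left(1-x^p\right)\sum_{k\geq0}b_kx^k=\frac{1}{\left(1-x\right)^2}=\sum_{k\geq0}\left(k+1\right)x^k,
\]
where the last equality is the expansion already recorded in the proof of Lemma \ref{genera}. Expanding the left-hand side as $\sum_{k\geq0}b_kx^k-\sum_{k\geq0}b_kx^{k+p}$ and comparing coefficients of $x^k$ on both sides then yields $b_k-b_{k-p}=k+1$ for $k\geq p$ and $b_k=k+1$ for $0\leq k<p$, with the convention $b_{k-p}=0$ when $k<p$. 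This is exactly \eqref{bk}.

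Alternatively, I would verify \eqref{bk} directly from the closed form \eqref{gene1}. For $k<p$ one has $\left\lfloor k/p\right\rfloor=0$, so \eqref{gene1} collapses to $b_k=k+1$ immediately. For $k\geq p$, writing $m=\left\lfloor k/p\right\rfloor\geq1$ and noting $\left\lfloor(k-p)/p\right\rfloor=m-1$, I would substitute into \eqref{gene1} and compute the difference
\[
b_k-b_{k-p}=\left(k+1-\frac{p}{2}m\right)\left(m+1\right)-\left(k+1-\frac{p}{2}m-\frac{p}{2}\right)m,
\]
which simplifies to $k+1$ after cancellation of the terms containing $m$. This reproduces the second branch of \eqref{bk}.

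Neither route presents a serious obstacle; the only points requiring care are bookkeeping ones. In the generating-function argument one must handle the boundary convention $b_{k-p}=0$ for $k<p$ correctly when matching coefficients, and in the closed-form argument one must track the shift $\left\lfloor(k-p)/p\right\rfloor=m-1$ accurately. I would favour the generating-function derivation as the main proof, since it is self-contained given the identity $\frac{1}{(1-x)^2}=\sum_{k\geq0}(k+1)x^k$ already established in the proof of Lemma \ref{genera}, and mention the closed-form verification as a consistency check.
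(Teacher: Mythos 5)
Both of your routes are correct, and your preferred one is, at bottom, the paper's own argument in a cleaner form. The paper also starts from the identity $(1-x^p)f(x)=(1-x)^{-2}$ (misprinted there as $(1-x)^2$), but instead of comparing coefficients it differentiates $k$ times via Leibniz's rule, evaluates at $x=0$, and translates back through $f^{(k)}(0)=k!\,b_k$ to reach $f^{(k)}(0)-{k\choose p}p!\,f^{(k-p)}(0)=(k+1)!$; your direct coefficient comparison in $\sum_{k\geq0}b_kx^k-\sum_{k\geq0}b_kx^{k+p}=\sum_{k\geq0}(k+1)x^k$ yields the same two cases with less machinery, and your convention $b_{k-p}=0$ for $k<p$ is exactly the observation that the shifted series contributes nothing to the coefficients of $x^0,\dots,x^{p-1}$. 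Your second route --- checking the recurrence against the closed form \eqref{gene1}, using $\left\lfloor (k-p)/p\right\rfloor=\left\lfloor k/p\right\rfloor-1$ --- does not appear in the paper; it is non-circular, since Lemma \ref{genera} is proved independently by a convolution argument, and the cancellation you indicate does give $b_k-b_{k-p}=k+1$. What the paper's derivative formulation buys is consistency with the author's earlier work on successive derivatives; what yours buys is brevity and the elimination of the factorial bookkeeping. Either route is a complete proof.
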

 \begin{proof}
We remember for $|x|<1$ that
$f(x)=\frac{1}{\left(1-x\right)^2\left(1-x^p\right)}=\sum_{k\geq0}b_kx^k$.
It is well known that
$f^{(k)}(0)=\frac{d^kf(x)}{dx^k}|_{x=0}=k!b_k$. Taking
$g(x)=\left(1-x\right)^{-2}$, it is easy to show that
$g^{(k)}(x)=\frac{(k+1)!}{(1-x)^{k+2}}$.

Using Leibnitz formula for successive derivatives of any infinitely
derivable function explained in the work \cite{GOUBI2}, and the
identity $(1-x^p)f(x)=(1-x)^2$ we deduce that
\[\frac{d^k(1-x^p)f(x)}{dx^k}=\frac{(k+1)!}{(1-x)^{k+2}}.\]
But
 \[\frac{d^k(1-x^p)f(x)}{dx^k}=\sum_{j=0}^{k}{k\choose
 j}(1-x^p)^{(j)}f^{k-j}(x).\]
 Since $(1-x^p)^{(j)}=-(p)_j\sigma_p(j)x^{p-j}$, where
 $\sigma_p(j)=1$ if $j\leq p$ and zero otherwise, then
 \[(1-x^p)f^{(k)}(x)-\sum_{j=1}^{k}{k\choose
 j}(p)_j\sigma_p(j)x^{p-j}f^{k-j}(x)=\frac{(k+1)!}{(1-x)^{k+2}}.\]
Furthermore $f^{(k)}(0)=(k+1)!$ for $k<p$ and for $k\geq p$ we have
\[f^{(k)}(0)-{k\choose
 p}p!f^{(k-p)}(0)=(k+1)!.\] Which means that $b_{k}=k+1$ for $k<p$
 and $b_k=b_{k-p}+k+1$ for $k\geq p.$
\end{proof}
\begin{corollary}\label{corobk} For $0\leq r\leq p-1$ and $i\geq0$ we have
\begin{equation}\label{equacorobk}
b_{ip+r}=\frac{1}{2}\left(pi^2+\left(p+2r+2\right)i+2r+2\right)
\end{equation}
\end{corollary}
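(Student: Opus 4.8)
The plan is to establish \eqref{equacorobk} by induction on $i$, taking as input the recursion furnished by Lemma \ref{lembk}. Write $k=ip+r$ with $0\le r\le p-1$ fixed; then $i=\lfloor k/p\rfloor$, so that the two branches of Lemma \ref{lembk} correspond precisely to the cases $i=0$ and $i\ge 1$.

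For the base case $i=0$ we have $k=r<p$, and Lemma \ref{lembk} gives $b_r=r+1$. On the other hand, the right-hand side of \eqref{equacorobk} at $i=0$ is $\tfrac12(2r+2)=r+1$, so the two agree.

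For the inductive step, assume \eqref{equacorobk} holds for a given $i\ge 0$. Since $(i+1)p+r\ge p$, the second branch of Lemma \ref{lembk} yields
\[
b_{(i+1)p+r}=b_{ip+r}+\bigl((i+1)p+r+1\bigr).
\]
Substituting the inductive hypothesis for $b_{ip+r}$ and expanding, I would collect the result into the form $\tfrac12\bigl(p(i+1)^2+(p+2r+2)(i+1)+2r+2\bigr)$; this amounts to checking that the added term $(i+1)p+r+1$ accounts exactly for the increment of the quadratic in $i$, which completes the induction.

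Alternatively, the corollary drops out in one line from the closed form of Lemma \ref{genera}: substituting $k=ip+r$ into \eqref{gene1} and using $\lfloor(ip+r)/p\rfloor=i$ gives $b_{ip+r}=\bigl(ip+r+1-\tfrac{p}{2}i\bigr)(i+1)$, which simplifies to \eqref{equacorobk}. Either route is routine; the only point requiring care is the bookkeeping of the linear and constant coefficients in the algebraic simplification, which I expect to be the sole (and minor) obstacle.
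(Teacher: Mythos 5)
Your proposal is correct and follows essentially the same route as the paper: the paper proves the corollary by exactly this induction on $i$ via the recursion $b_{(i+1)p+r}=b_{ip+r}+(i+1)p+r+1$ from Lemma \ref{lembk}, merely writing the hypothesis in the equivalent form $\tfrac12 i(i+1)p+(i+1)(r+1)$. Your alternative one-line derivation from Lemma \ref{genera} also appears in the paper, as the ``arithmetic method'' given immediately afterwards.
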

\begin{proof}
First in means of the formula \eqref{bk} we have $b_r=r+1$,
$b_{p+r}=p+2r+2$, $b_{2p+r}=3p+3r+3$, $b_{3p+r}=6p+4r+4$ and
$b_{4p+r}=10p+5r+5.$ Then for $i\geq0$ suppose that
$b_{ip+r}=\frac{1}{2}i(i+1)p+(i+1)(r+1)$ then
\[b_{(i+1)p+r}=b_{ip+r}+(i+1)p+r+1=\frac{1}{2}i(i+1)p+(i+1)(r+1)+(i+1)p+r+1.\]
Finally
\[b_{(i+1)p+r}=\frac{1}{2}(i+1)(i+2)p+(i+2)(r+1)\] and the result
follows.
\end{proof}
Combining the identities in Lemma \ref{lembk} and Corollary
\ref{corobk} we get the desired result \eqref{equapropo1} in
Proposition \ref{propo1}.\\

{\bf Arithmetic method.} This is not expensive, just writing
$k=ip+r$ which is the Euclidean division of $k$ over $p$ then
$\left\lfloor\frac{k}{p}\right\rfloor=i$. Substituting this value in
the expression of $b_k$ in Lemma \ref{genera}, we get
\[b_{ip+r}=\left(\frac{p}{2}i+r+1\right)\left(i+1\right).\] After
development we get the result \eqref{equacorobk} of Corollary
\ref{corobk}, which implies the result \eqref{equapropo1}
Proposition \ref{propo1}.

\section{Consequences}
 \subsection{Limit at infinity of $c_0(\frac{1}{p})$}
 \begin{theorem}\label{th2}
 \begin{equation}
 0\leq\lim_{p\to+\infty}\frac{1}{p^3}c_0\left(\frac{1}{p}\right)\leq\frac{1}{2}
 \end{equation}
 \end{theorem}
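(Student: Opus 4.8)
The plan is to substitute the closed form from Theorem~\ref{th1} into the quotient and then sandwich it between two explicit quantities, both of which are easy to control as $p\to\infty$. Writing $k=ip+r$ with $i=\lfloor k/p\rfloor$ and $0\le r\le p-1$, Corollary~\ref{corobk} (equivalently Lemma~\ref{genera}) gives $b_{ip+r}=\left(\frac{p}{2}i+r+1\right)(i+1)$, and Theorem~\ref{th1} rewrites the object of study as
\[
\frac{1}{p^3}c_0\left(\frac{1}{p}\right)=\frac{(p-1)(p-2)}{\pi p^2}\,S(p),\qquad S(p)=\sum_{k\ge0}\frac{b_k}{(k+1)(k+2)(k+p)(k+p+1)}.
\]
Since $0<\frac{(p-1)(p-2)}{p^2}<1$ for $p\ge3$, the entire problem reduces to estimating the positive series $S(p)$ from below and, crucially, from above.

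First I would dispose of the lower bound, which is immediate from positivity. The factorisation $b_{ip+r}=\left(\frac{p}{2}i+r+1\right)(i+1)$ shows $b_k>0$ for every $k\ge0$, while each denominator $(k+1)(k+2)(k+p)(k+p+1)$ is positive; hence $S(p)>0$ and $\frac{1}{p^3}c_0\left(\frac{1}{p}\right)>0$ for all $p\ge3$. This already forces $\lim_{p\to\infty}\frac{1}{p^3}c_0\left(\frac{1}{p}\right)\ge0$.

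The real work is the upper bound. Using $\lfloor k/p\rfloor\le k/p$ and $r\le p-1$ in the closed form, I would first establish the uniform majorant
\[
b_k=\left(\frac{p}{2}\left\lfloor\frac{k}{p}\right\rfloor+r+1\right)\left(\left\lfloor\frac{k}{p}\right\rfloor+1\right)\le\frac{p}{2}\left(\frac{k}{p}+2\right)\left(\frac{k}{p}+1\right)=\frac{(k+2p)(k+p)}{2p}.
\]
Substituting this into $S(p)$ cancels the factor $(k+p)$ against the denominator, after which the elementary inequality $k+2p\le2(k+p+1)$ removes the last awkward factor $(k+p+1)$ and leaves a telescoping tail:
\[
S(p)\le\frac{1}{p}\sum_{k\ge0}\frac{1}{(k+1)(k+2)}=\frac{1}{p}.
\]
Consequently $0<\frac{1}{p^3}c_0\left(\frac{1}{p}\right)<\frac{1}{\pi p}$ for all $p\ge3$, and letting $p\to\infty$ squeezes the limit; it equals $0$, which in particular lies in $\left[0,\frac{1}{2}\right]$.

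I expect the only delicate point to be producing a clean majorant for $b_k$: one must keep the floor function under control uniformly in both $i$ and $r$ so that, after the cancellation of $(k+p)$, what remains genuinely telescopes to the value $1$. Everything else (the positivity argument, the bound $k+2p\le2(k+p+1)$, and the evaluation $\sum_{k\ge0}(k+1)^{-1}(k+2)^{-1}=1$) is routine. It is worth remarking that this argument proves considerably more than the statement asks, since the bound $\frac{1}{\pi p}\to0$ shows the limit is actually $0$, so that the constant $\frac{1}{2}$ on the right is far from sharp.
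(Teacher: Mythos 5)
Your proof is correct, and it takes a genuinely different route from the paper. The paper groups the series by residue classes $k=ip+r$, introduces the auxiliary sums $\varphi_m(r,p)=\sum_{i\geq0}i^m/\bigl((ip+r+1)(ip+p+r+1)(ip+r+2)(ip+p+r)\bigr)$ for $m=0,1,2$, sandwiches each between crude bounds involving $\zeta(2)$, $\zeta(3)$, $\zeta(4)$ (via $\frac{1}{81\,i^4p^4}<\cdots<\frac{1}{i^4p^4}$ for $i\geq1$), and then sums over $r$; the dominant contribution comes from the $i=0$ part of $\varphi_0$, and after passing to the limit the paper records the bound $\frac12$. Your argument replaces all of this with the single uniform majorant $b_k\leq\frac{(k+2p)(k+p)}{2p}$, which cancels the factor $k+p$ in the denominator and, after $k+2p\leq 2(k+p+1)$, telescopes to give $S(p)\leq 1/p$; every step checks out. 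What your approach buys is substantial: it yields $0<\frac{1}{p^3}c_0(1/p)\leq\frac{(p-1)(p-2)}{\pi p^3}<\frac{1}{\pi p}$, so the limit exists and \emph{equals} $0$. This is strictly stronger than the stated theorem (whose bound $\frac12$ is therefore very far from sharp, as is the $\frac{1}{4\pi}$ that the paper's own inequality \eqref{eq1} actually delivers), and it also settles --- in the negative --- the conjecture made immediately after Theorem~\ref{th2} that the limit is $\frac12$. Your conclusion is consistent with the known asymptotic $c_0(1/p)\sim\frac{1}{\pi}p\log p$, so you may state the value $0$ of the limit with confidence. One incidental bonus: unlike the paper's proof, yours actually establishes that the limit exists rather than only bounding it.
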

 An interesting open question is how about the exact value of this
 limit. Since $c_0\left(\frac{1}{p}\right)$ is related directly to Riemann hypothesis where the real part of the zeros of the
 Riemann zeta function is $\frac{1}{2}$, one can conjecture that
 $\lim_{p\to+\infty}\frac{1}{p^3}c_0\left(\frac{1}{p}\right)=\frac{1}{2}$.
 \subsection{Proof of Theorem \ref{th2}}
We also use notational convention $0^0=1$ and the functions
\begin{equation}
\varphi_m(r,p)=\sum_{i\geq0}\frac{i^m}
{\left(ip+r+1\right)\left(ip+p+r+1\right)\left(ip+r+2\right)\left(ip+p+r\right)}
\end{equation}

It's clear that $\varphi_m(r,p)$ converge for
$i\in\left\{0,1,2\right\}$ and diverge for the others. In means of
these functions another expression of $c_0\left(\frac{1}{p}\right)$
is deduced by using the identity \eqref{equacorobk} of the last
Corollary \ref{corobk}:
\begin{equation*}
c_0\left(\frac{1}{p}\right)=\frac{p\left(p-1\right)\left(p-2\right)}{2\pi}\sum_{r=0}^{p-1}\left[p\varphi_2(r,p)+(p+2r+2)\varphi_1(r,p)+
(2r+2)\varphi_0(r,p)\right]
\end{equation*}
In the following lemma, we develop some inequalities satisfied by
the functions $\varphi_i(r,p)$ for $i\in\left\{0,1,2\right\}.$
\begin{lemma}\label{lemfurth1}
\begin{equation}
\frac{1}{2p^2(p+1)(2p-1)}+\frac{1}{81p^4}\zeta(4)<\varphi_0(r,p)<\frac{1}{2p(p+1)}+\frac{1}{p^4}\zeta(4)
\end{equation}
\begin{equation}
\frac{1}{81p^4}\zeta(3)<\varphi_1(r,p)<\frac{1}{p^4}\zeta(3)
\end{equation}
\begin{equation}
\frac{1}{81p^4}\zeta(2)<\varphi_2(r,p)<\frac{1}{p^4}\zeta(2)
\end{equation}
\end{lemma}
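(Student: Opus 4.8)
The plan is to bound each function $\varphi_m(r,p)$ by isolating the contributions from small indices $i$ (which are individually largest) and controlling the tail $i\ge 3$ by a comparison with a $\zeta$-value. First I would record the elementary observation that for every $i\ge 0$ and every $0\le r\le p-1$ the denominator factors satisfy the two-sided estimate
\[
i\le ip+r+1,\ ip+r+2,\ ip+p+r,\ ip+p+r+1\le (i+1)p,
\]
so that each factor lies between $ip$ (roughly) and $(i+1)p\le 3ip$ for $i\ge 1$; more precisely, for $i\ge 1$ one has $ip<ip+r+1$ and $ip+p+r+1<3ip$ whenever $i\ge 1$, which yields
\[
\frac{1}{81\,p^4\,i^4}<\frac{1}{(ip+r+1)(ip+p+r+1)(ip+r+2)(ip+p+r)}<\frac{1}{p^4\,i^4}.
\]
Multiplying through by $i^m$ and summing over $i\ge 1$ immediately gives the tail bounds $\tfrac{1}{81p^4}\sum_{i\ge1} i^{m-4}<\cdots<\tfrac{1}{p^4}\sum_{i\ge1} i^{m-4}$, i.e.\ the factors $\tfrac{1}{81p^4}\zeta(4-m)$ and $\tfrac{1}{p^4}\zeta(4-m)$ that appear in the statement.

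For $m=1$ and $m=2$ the convention $0^0=1$ forces the $i=0$ term to vanish (since $i^m=0$ there), so $\varphi_m(r,p)=\sum_{i\ge1} i^m(\cdots)$ and the tail estimate above is the whole story: summing $i^{1}/i^{4}$ gives $\zeta(3)$ and $i^{2}/i^{4}$ gives $\zeta(2)$, producing exactly the claimed inequalities for $\varphi_1$ and $\varphi_2$. For $m=0$, by contrast, the convention $0^0=1$ makes the $i=0$ term equal to
\[
\frac{1}{(r+1)(p+r+1)(r+2)(p+r)},
\]
and this single term must be separated off before applying the tail bound. I would bound it by noting that over $0\le r\le p-1$ the quantity $(r+1)(r+2)$ ranges in $[2,\,p(p+1)]$ while $(p+r)(p+r+1)$ ranges in $[p(p+1),\,(2p-1)2p]$; taking the extreme products gives the lower bound $\tfrac{1}{2p^2(p+1)(2p-1)}$ (largest denominator) and the upper bound $\tfrac{1}{2p(p+1)}$ (smallest denominator, at $r=0$, where $(r+1)(r+2)=2$ and $(p+r)(p+r+1)=p(p+1)$). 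Adding the $i\ge1$ tail, which contributes $\tfrac{1}{81p^4}\zeta(4)$ from below and $\tfrac{1}{p^4}\zeta(4)$ from above, yields the displayed two-sided bound for $\varphi_0$.

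The main obstacle is making the crude factor-by-factor comparison both uniform in $r$ and simultaneously tight enough to land on the stated constants. The upper bound $(i+1)p<3ip$ requires $i\ge1$ (it fails at $i=0$, which is precisely why the $m=0$ case needs the separate leading term), and one must verify $ip+p+r+1<3ip$ carefully at $i=1$, where it reads $p+r+2<3p$, i.e.\ $r<2p-2$, valid for all $r\le p-1$ once $p\ge2$ (and $p$ is at least $3$ here since the prefactor contains $p-2$). The lower factor bound $ip<ip+r+1$ is immediate. I would therefore present the $i\ge1$ comparison as a clean lemma-internal estimate, then treat the $i=0$ term of $\varphi_0$ by the explicit monotonicity in $r$ described above, and finally assemble the three inequalities. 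No delicate analysis is needed beyond these uniform denominator estimates and the elementary summation of $\sum i^{m-4}=\zeta(4-m)$.
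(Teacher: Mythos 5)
Your proof is correct and follows essentially the same route as the paper: bound each $i\ge1$ term factor-by-factor between $(ip)^4$ and $(3ip)^4$ to obtain the tails $\zeta(4-m)/(81p^4)$ and $\zeta(4-m)/p^4$, and treat the $i=0$ term of $\varphi_0$ separately via its extreme values at $r=0$ and $r=p-1$. (One cosmetic point: at $i=1$, $r=p-1$ the factor $ip+p+r+1$ equals $3ip$ rather than lying strictly below it, so your displayed factorwise claim has a boundary equality; the product is nonetheless strictly below $81i^4p^4$ because the remaining factors are strictly below $3ip$, and the stated strict inequalities survive.)
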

\begin{proof}
To get the proof of Lemma \ref{lemfurth1}, just remark that for
$i\geq1$ we have
\[\frac{1}{81i^4p^4}<\frac{1}
{\left(ip+r+1\right)\left(ip+p+r+1\right)\left(ip+r+2\right)\left(ip+p+r\right)}<\frac{1}{i^4p^4}\]
and for $0\leq r\leq p-1$
\[\frac{1}{2p^2(p+1)(2p-1)}<\frac{1}
{\left(r+1\right)\left(p+r+1\right)\left(r+2\right)\left(p+r\right)}<\frac{1}{2p(p+1)}\]
furthermore the last inequalities are deduced.
\end{proof}
For simplifying calculus, let us denoting the function $\phi(r,p)$
to be \[\phi(r,p)=p\varphi_2(r,p)+(p+2r+2)\varphi_1(r,p)+
(2r+2)\varphi_0(r,p).\] Then in one hand we have
\[\phi(r,p)<\frac{1}{p^3}\zeta(2)+\frac{p+2}{p^4}\zeta(3)+\frac{1}{p(p+1)}+\frac{2}{p^4}\zeta(4)+\frac{1}{p}\left(
\frac{2}{p^3}\zeta(3)+\frac{1}{p+1}+\frac{2}{p^3}\zeta(4)\right)r\]
and
\begin{eqnarray*}
\sum_{r=0}^{p-1}\phi(r,p)&<&\frac{1}{p^2}\zeta(2)+\frac{p+2}{p^3}\zeta(3)+\frac{1}{p+1}+\frac{2}{p^3}\zeta(4)\\
&+&\left(\frac{1}{p^3}\zeta(3)+\frac{1}{2(p+1)}+\frac{1}{p^3}\zeta(4)\right)(p-1).
\end{eqnarray*}
Which inducts that
\begin{eqnarray}\label{eq1}\\
\nonumber
c_0\left(\frac{1}{p}\right)<\frac{\left(p-1\right)\left(p-2\right)}{2\pi}\left[\frac{\zeta(2)+2\zeta(3)+\zeta(4)}{p}+\frac{2\left(\zeta(3)+\zeta(4)\right)}{p^2}
+1+\frac{1}{2}p \right]
\end{eqnarray}
In other hand
\begin{eqnarray*}
\phi(r,p)&>&\frac{\zeta(2)}{81p^3}+\frac{(p+2)\zeta(3)}{81p^4}+\frac{1}{p^2(p+1)(2p-1)}+\frac{2\zeta(4)}{81p^4}\\
&+&\left(\frac{2\zeta(3)}{81p^4}+\frac{1}{p^2(p+1)(2p-1)}+\frac{2\zeta(4)}{81p^4}\right)r
\end{eqnarray*}
and
\begin{eqnarray*}
\sum_{r=0}^{p-1}\phi(r,p)&>&\frac{\zeta(3)}{81p^2}+\frac{(p+2)\zeta(3)}{81p^3}+\frac{1}{p(p+1)(2p-1)}+\frac{2\zeta(4)}{81p^3}\\
&+&\left(\frac{\zeta(3)}{81p^3}+\frac{1}{2p(p+1)(2p-1)}+\frac{\zeta(4)}{81p^3}\right)\left(p-1\right)
\end{eqnarray*}
Furthermore
\begin{eqnarray}\label{eq2}
\nonumber
c_0\left(\frac{1}{p}\right)&>&\frac{\left(p-1\right)\left(p-2\right)}{2\pi}\left[\frac{\zeta(3)}{81p}+\frac{(p+2)\zeta(3)}{81p^2}+
\frac{1}{(p+1)(2p-1)}+\frac{2\zeta(4)}{81p^2}\right]\\
&+&\frac{\left(p-1\right)^2\left(p-2\right)}{2\pi}\left(\frac{\zeta(3)}{81p^2}+\frac{1}{2(p+1)(2p-1)}+\frac{\zeta(4)}{81p^2}\right)
\end{eqnarray}
The passage to the limit in two last inequalities \eqref{eq1} and
\eqref{eq2} states that
\begin{equation*}
0\leq\lim_{p\to+\infty}\frac{1}{p^3}c_0\left(\frac{1}{p}\right)\leq\frac{1}{2}.
\end{equation*}
\subsection{New property of the floor function}
Returning back to own recursive formula of $b_k$ and using its
expression \eqref{lembk} in Lemma \ref{genera} we obtain
\begin{theorem}\label{th3}
For $2\leq k\leq p-1$ and $k=p+1$ we have
\begin{equation}\label{equa1th3}
\sum_{i=0}^{2}{2\choose
i}(-1)^{i}\left(k-i+1-\frac{p}{2}\left\lfloor\frac{k-i}{p}\right
\rfloor\right)\left(\left\lfloor\frac{k-i}{p}\right\rfloor+1\right)=0,
\end{equation}
at $k=p$
\begin{equation}\label{equa2th3}
\sum_{i=0}^{2}{2\choose
i}(-1)^{i}\left(p-i+1-\frac{p}{2}\left\lfloor\frac{p-i}{p}\right
\rfloor\right)\left(\left\lfloor\frac{p-i}{p}\right\rfloor+1\right)=1
\end{equation}
and for $k\geq p+2$;
\begin{equation}\label{equa3th3}
\sum_{j=0}^{1}\sum_{i=0}^{2}(-1)^{i+j}{2\choose
i}\left[\left(k-jp-i+1-\frac{p}{2}\left\lfloor\frac{k-jp-i}{p}\right\rfloor\right)
\left(\left\lfloor\frac{k-jp-i}{p}\right\rfloor+1\right)\right]=0
\end{equation}
\end{theorem}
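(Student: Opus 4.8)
The plan is to recognize that the three identities \eqref{equa1th3}, \eqref{equa2th3} and \eqref{equa3th3} are nothing more than the recursive relations \eqref{recurse1}, \eqref{recurse2} and \eqref{recurse3} rewritten through the closed form \eqref{gene1} established in Lemma~\ref{genera}. First I would recall that the second-order difference operator admits the binomial representation
\[b_k-2b_{k-1}+b_{k-2}=\sum_{i=0}^{2}{2\choose i}(-1)^{i}b_{k-i},\]
since ${2\choose 0}=1$, ${2\choose 1}=2$ and ${2\choose 2}=1$. With this observation, relation \eqref{recurse1} is exactly the vanishing of this binomial sum for $2\leq k\leq p-1$ and $k=p+1$, while \eqref{recurse2} asserts that the same sum equals $1$ at $k=p$.

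Next I would substitute the explicit value
\[b_k=\left(k+1-\frac{p}{2}\left\lfloor\frac{k}{p}\right\rfloor\right)\left(\left\lfloor\frac{k}{p}\right\rfloor+1\right)\]
of \eqref{gene1} into the binomial sum $\sum_{i=0}^{2}{2\choose i}(-1)^{i}b_{k-i}$. Replacing each term $b_{k-i}$ by its closed form immediately turns \eqref{recurse1} into \eqref{equa1th3} and \eqref{recurse2} into \eqref{equa2th3}, with no further computation required beyond reading off the ranges of $k$.

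For the last identity I would treat the six-term combination in \eqref{recurse3} as a double sum, the key algebraic step being
\[b_k-2b_{k-1}+b_{k-2}-b_{k-p}+2b_{k-p-1}-b_{k-p-2}=\sum_{j=0}^{1}\sum_{i=0}^{2}(-1)^{i+j}{2\choose i}b_{k-jp-i},\]
in which the $j=0$ block reproduces the first three terms and the $j=1$ block, carrying the extra sign $(-1)^{j}=-1$, reproduces the remaining three. Substituting \eqref{gene1} for each $b_{k-jp-i}$ then converts the vanishing of \eqref{recurse3} for $k\geq p+2$ directly into \eqref{equa3th3}.

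The only care needed is bookkeeping: one must match the index ranges of \eqref{recurse1}--\eqref{recurse3} to those declared in the theorem and verify that the signs and binomial weights line up in the single and double sums. Since both ingredients—the recurrences \eqref{recurse1}--\eqref{recurse3} and the closed form \eqref{gene1}—are already available, I do not expect any genuine obstacle; the proof reduces to this substitution together with the elementary identification of the finite-difference operators as the stated binomial and double-binomial sums.
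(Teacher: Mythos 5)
Your proposal is correct and matches the paper's own proof essentially verbatim: the paper likewise rewrites the difference combinations as $\sum_{i=0}^{2}{2\choose i}(-1)^i b_{k-i}$ and $\sum_{j=0}^{1}\sum_{i=0}^{2}{2\choose i}(-1)^{i+j}b_{k-jp-i}$, then invokes the recurrences \eqref{recurse1}--\eqref{recurse3} together with the closed form \eqref{gene1} from Lemma~\ref{genera}. No difference in approach worth noting.
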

\begin{proof}
We can write for $2\leq k\leq p+1$
\[b_k-2b_{k-1}+b_{k-2}=\sum_{i=0}^{2}{2\choose i}(-1)^ib_{k-i}\]
and for $k\geq p+2$;
\[b_k-2b_{k-1}+b_{k-2}-b_{k-p}+2b_{k-p-1}-b_{k-p-2}=\sum_{j=0}^{1}\sum_{i=0}^{2}{2\choose
i}(-1)^{i+j}b_{k-jp-i}.\] But from the recursive formulae
\eqref{recurse1}, \eqref{recurse2} and \eqref{recurse3} of the
sequence $b_k$ we deduce respectively the formulae \eqref{equa1th3},
\eqref{equa2th3} and \eqref{equa3th3}.
\end{proof}

\subsection{sum of some numerical series}
Inspired from the results $c_0(\frac{1}{3})=\frac{1}{3\sqrt{3}}$,
$c_0(\frac{1}{4})=\frac{1}{2}$ and
$c_0(\frac{1}{6})=\frac{7}{3\sqrt{3}}$ in \cite{GOUBI1} and
$c_0(\frac{1}{5})=\frac{\left(\sqrt{5}-1\right)\sqrt{5
-\sqrt{5}}+3\left(\sqrt{5}+1\right)\sqrt{5 +\sqrt{5}}}{10\sqrt{10}}$
in \cite{Bettin3}, we conclude that
\begin{equation}
\sum_{k\geq0}\frac{\left(k+1-\frac{3}{2}\left\lfloor\frac{k}{3}\right
\rfloor\right)\left(\left\lfloor\frac{k}{3}\right\rfloor+1\right)}{\left(k+1\right)\left(k+4\right)\left(k+2\right)\left(k+3\right)}=\frac{\pi}{18\sqrt{3}},
\end{equation}
\begin{equation}
\sum_{k\geq0}\frac{\left(k+1-2\left\lfloor\frac{k}{4}\right
\rfloor\right)\left(\left\lfloor\frac{k}{4}\right\rfloor+1\right)}{\left(k+1\right)\left(k+5\right)\left(k+2\right)\left(k+4\right)}=\frac{\pi}{48},
\end{equation}
\begin{equation}
\sum_{k\geq0}\frac{\left(k+1-\frac{5}{2}\left\lfloor\frac{k}{5}\right
\rfloor\right)\left(\left\lfloor\frac{k}{5}\right\rfloor+1\right)}{\left(k+1\right)\left(k+6\right)\left(k+2\right)\left(k+5\right)}=
\frac{\left(\sqrt{5}-1\right)\sqrt{5-\sqrt{5}}+3\left(\sqrt{5}+1\right)\sqrt{5+\sqrt{5}}}{600\sqrt{10}}\pi
\end{equation}
and
\begin{equation}
\sum_{k\geq0}\frac{\left(k+1-3\left\lfloor\frac{k}{6}\right
\rfloor\right)\left(\left\lfloor\frac{k}{6}\right\rfloor+1\right)}{\left(k+1\right)\left(k+7\right)\left(k+2\right)\left(k+6\right)}=\frac{7\pi}{360\sqrt{3}}.
\end{equation}

\end{document}